\theoremstyle{plain}
\newtheorem{theorem}{Theorem}
\newtheorem{lemma}{Lemma}
\newtheorem{proposition}{Proposition}
\numberwithin{equation}{section}
\numberwithin{lemma}{section}
\numberwithin{proposition}{section}
\numberwithin{corollary}{section}
\numberwithin{remark}{section}
\crefname{theorem}{Theorem}{Theorem}
\crefname{lemma}{Lemma}{Lemmas}
\crefname{proposition}{Proposition}{Propositions}
\crefname{question}{Question}{Questions}
\crefname{conjecture}{Conjecture}{Conjectures}
\crefname{equation}{}{}
\crefname{section}{Section}{Section}
\crefname{figure}{Figure}{Figure}
\crefname{theoremx}{Theorem}{Theorem}
\begin{document}

\title[A necessary condition for the logarithmic Minkowski problem]{A necessary condition for the logarithmic Minkowksi problem in higher dimension}

\author{Mijia Lai}
\address{School of Mathematical Sciences, Shanghai Jiao Tong University, Shanghai 200240, China}
\email{laimijia@sjtu.edu.cn}

\author{Zixiao Wang}
\address{Yangpu Senior High School, Shanghai 200092, China}
\email{3919034156@qq.com}

\thanks{This paper grows out from a research project in the National Scientific and Technological Innovation Talent Training Program of Shanghai Municipality.}
\date{}
\begin{abstract}
In this paper, we establish a necessary condition for the logarithmic Minkowski problem in higher dimensions. This result generalizes a necessary condition proposed by Liu, Lu, Sun, and Xiong in their investigation of the two-dimensional case, and also refines the so-called subspace concentration condition.
\end{abstract}
\maketitle
\section{Introduction}
The classical Minkowski problem, dating back to 1897 by Hermann Minkowski \cite{Mi}, asks for the existence of a strictly convex closed surface $S$ in $\mathbb{R}^3$ whose Gaussian curvature, when expressed as a function of the unit normal, coincides with a prescribed positive function $f$ on the sphere $\mathbb{S}^{2}$ . Reflecting on its profound influence, Calabi once remarked that "from the geometric viewpoint, it [the Minkowski problem] is the Rosetta Stone, from which several related problems can be solved."

In its modern, measure-theoretic formulation, the Minkowski problem seeks necessary and sufficient conditions for a given Borel measure $\mu$ on $\mathbb{S}^{n-1}$ to be the surface area measure of a convex body $K \subset \mathbb{R}^n$. For a convex body $K$ with boundary $\partial K$, denote by
\[
\nu: \partial' K \rightarrow \mathbb{S}^{n-1}
\]
the Gauss map, which is defined $a.e. \mathcal{H}^{n-1}$ on $\partial K$ where the normal is unique. The surface area measure $\mu_K$ of $K$ is then defined on Borel subsets $E \subset \mathbb{S}^{n-1}$ by
$$
\mu_K(E):=\mathcal{H}^{n-1}\left(\nu^{-1}(E)\right),
$$
where $\mathcal{H}^{n-1}$ denotes the $(n-1)$-dimensional Hausdorff measure. 

Minkowski himself first solved the polytopal case of the problem \cite{Mi}. Let $n_1, \ldots, n_m$ be distinct, non-coplanar unit vectors in $\mathbb{R}^3$ and $a_1, \ldots, a_m>0$ be positive real numbers. Then there exists a closed convex polyhedral surface in $\mathbb{R}^3$, unique up to translation, whose faces have precisely these vectors as outer unit normals and these numbers as their respective surface areas, if and only if
$$
\sum_{i=1}^m a_i n_i=0 .
$$
The condition $\sum_i a_i n_i=0$ admits a clear geometric interpretation: the total signed area of the projection of any closed polyhedral surface onto an arbitrary plane must be zero.

Subsequently, Bonnesen-Fenchel\cite{BF} and Alexandrov\cite{Al} extended the existence and uniqueness result to general convex hypersurfaces in all dimensions. Later, the regularity of solutions-especially the question of when a weak (generalized) convex surface is in fact smooth-was established through seminal contributions of (among many others) Lewy \cite{Le}, Nirenberg\cite{Ni}, Cheng-Yau\cite{CY} and Pogorelov \cite{Po}.

Lutwak\cite{Lu}, building on earlier ideas of Firey \cite{F}, introduced the notion of $L_p$ surface area measure for a general convex body $K \subset \mathbb{R}^n$. For a Borel set $E \subset \mathbb{S}^{n-1}$, the $L_p$ surface area measure $\mu_{(K, p)}$ is defined by
$$
\mu_{(K, p)}(E):=\int_{\nu_K^{-1}(E)}\left(x \cdot \nu_K(x)\right)^{1-p} d \mathcal{H}^{n-1}(x).
$$
Note that when $p=1$, $\mu_{(K, 1)}$ is just the surface area measure of $K$ mentioned above.

Analogous to the classical Minkowski problem, the $L_p$ Minkowski problem asks for necessary and sufficient conditions on a given Borel measure $\mu$ on $\mathbb{S}^{n-1}$ to be the $L_p$ surface area measure of some convex body.
In the past two decades, significant progress has been made on this problem-see, for example, the foundational works of Lutwak \cite{Lu}, Chou-Wang \cite{CW}, Lutwak-Yang-Zhang \cite{LYZ}, and Böröczky-Lutwak-Yang-Zhang \cite{BLYZ}. The reader is also referred to the excellent survey by Huang-Yang-Zhang\cite{HYZ} on the theory of Minkowski problems for variety of geometric measures including the above mentioned $L_p$ surface area measure. 

An important special case of the $L_{p}$ Minkowski problem arises when $p=0$, known as the logarithmic Minkowski problem. The associated $L_{0}$ surface measure-commonly referred to as the cone-volume measure (up to a factor of $\frac{1}{n}$)-is geometrically natural. For a polytope $P$ containing the origin in its interior, the cone-volume measure of a face is simply the volume of the convex hull of that face and the origin.

In a seminal work\cite{BLYZ}, Böröczky-Lutwak-Yang-Zhang established that the logarithmic \newline 
Minkowski problem admits a solution for an even measure if and only if the measure satisfies the subspace concentration condition. 
Later, Chen-Li-Zhu\cite{CLZ} extended this result by proving that the same subspace concentration condition remains sufficient for general (non-symmetric) measures.

The discrete version of the logarithmic Minkowski problem is to prescribe a disrete  cone volume measure of the form $\mu= \sum_{i=1}^m v_i \delta_{n_i}$. Geometrically, this amounts to finding necessary and sufficient conditions on a set of unit vectors $n_1, \dots, n_m \in \mathbb{R}^n$ and a set of positive numbers $v_1, \cdots, v_m>0 $ that ensure the existence of an $m$-faced polytope $P$, containing the origin in its interior, whose faces have outer unit normals $n_1, \cdots, n_m$ and corresponding cone-volumes $v_1, \cdots, v_m$.

Stancu \cite{S1} investigated the discrete logarithmic Minkowski problem in $ \mathbb{R}^2$, i.e., the case of polygons. She established a general existence result for $n$-gons that have no pair of parallel sides. Additionally, she provided a sufficient condition for existence in two further cases: either when $n>4$ and the polygon has at least two pairs of parallel sides, or when the polygon is a trapezoid.

Zhu~\cite{Z} solved the discrete logarithmic Minkowski problem for measures whose supports are in general position. Here, a set of unit vectors in $\mathbb{R}^n$ is said to be in general position if they are not contained in any closed hemisphere and any $n$ of them are linearly independent. Later, Böröczky-Hegedűs-Zhu~\cite{BHZ} proved that the essential subspace concentration condition is also sufficient for the existence of solutions to the discrete logarithmic Minkowski problem in $\mathbb{R}^n$. 

Liu-Lu-Sun-Xiong ~\cite{LLSX} recently discovered a necessary condition for two dimensional convex bodies whose centroid lies at the origin. They also completely solved the logarithmic Minkowski problem for quadrilaterals.

The main result of the present paper is to extend above-mentioned Liu-Lu-Sun-Xiong's condition to higher dimensions.  

\begin{theorem} \label{main}
    Let $K\subset \mathbb{R}^n,(n\geq 3)$ be a convex body with \textbf{centroid at the origin}, $\mu$ be the associated cone volume measure. Set $V:=\mu(\mathbb{S}^{n-1})=V(K)$, then for any $u\in \mathbb{S}^{n-1}$, there holds
    \begin{align} \label{necessary}
    n\frac{\mu(u)+\mu(-u)}{V}+(n+1)^{n-1} |\frac{\mu(u)-\mu(-u)}{V}|^n\leq 1.
    \end{align}
    Moreover if the equality holds for some $u$, then $K$ is 
    \begin{itemize}
        \item either an oblique prism of the form $K=\operatorname{conv}\{ P\cup P+\vec{l}\}$,
        \item or a cone of the form $K=\operatorname{conv}\{P \cup \{p\}\}$.
    \end{itemize}
    Here $P$ is an $(n-1)$-dimensional convex body lying in an $(n-1)$-dimensional hyperplane $H$ perpendicular to $u$, and $\vec{l}$ is a translation vector and $p$ a point both not contained in $H$. $\operatorname{conv}\{A\}$ represents the convex hull of $A$, i.e., the smallest convex set containing $A$. 
\end{theorem}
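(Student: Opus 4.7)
The plan is to reduce the inequality to a one-dimensional statement about concave cross-section profiles, and then establish that statement using two complementary Brunn--Minkowski-type bounds together with the centroid constraint.

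After a rotation we take $u = e_n$, and set $A(t) = \mathrm{vol}_{n-1}(K \cap \{x_n = t\})$, $h_\pm = \pm\max\{\pm x_n : x \in K\}$, $A_\pm = A(\pm h_\pm)$, $L = h_+ + h_-$. By Brunn--Minkowski, $\psi := A^{1/(n-1)}$ is concave on $[-h_-, h_+]$. Since $V = \int A\,dt$, the centroid condition reads $\int tA\,dt = 0$, and $\mu(\{\pm u\}) = h_\pm A_\pm/n$, the desired inequality assumes the one-dimensional form
\[
\hat\alpha + \hat\beta + \frac{(n+1)^{n-1}}{n^n}|\hat\alpha-\hat\beta|^n \leq 1, \qquad \hat\alpha := h_+ A_+/V,\ \hat\beta := h_- A_-/V,
\]
depending only on the data of $\psi$. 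The problem is now purely one-dimensional.

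Two ingredients will be combined. First, the Brunn--Minkowski chord bound: concavity of $\psi$ gives $\psi \geq $ its linear interpolation between the endpoints, hence $V \geq L \cdot T(a,b)$ with $T(a,b) = (a^n - b^n)/(n(a-b))$, $a = \psi(h_+)$, $b = \psi(-h_-)$. This bound is sharp precisely when $\psi$ is linear (frustums), and in particular becomes $V = LA$ in the prism case $a = b$. Second, the cumulative volume $F(t) := \mathrm{vol}(K \cap \{x_n \leq t\})$ satisfies that $F^{1/n}$ is concave on $[-h_-, h_+]$: applying Brunn--Minkowski to the convex lower caps $K \cap \{x_n \leq t\}$ as $t$ varies gives this directly. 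Noting $F(-h_-) = 0$, $F(h_+) = V$, and the centroid identity $\int F\,dt = h_+ V$ (by integration by parts from $\int tA\,dt = 0$), the tangent-line upper bound on $F^{1/n}$ at $t = h_+$ yields $h_+/L \leq (1-(1-x)^{n+1})/((n+1)x)$ with $x = A_+ L/(nV)$, sharp for the cone with apex at $-h_-$; the symmetric bound at $t = -h_-$ controls $h_-/L$ in terms of $x' = A_- L/(nV)$.

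The main analytical obstacle is to combine these two bounds—sharp at distinct extremals (prism vs.\ cone)—into the single sharp inequality of the theorem; neither tool alone suffices, since the tangent bound is loose at the prism while the chord bound is loose at the cone. The plan is to use the chord bound, the two tangent bounds, and the centroid identity to reduce to a finite-dimensional optimization over $(a,b,L,h_+,V)$, and verify directly that the supremum of $\hat\alpha + \hat\beta + \frac{(n+1)^{n-1}}{n^n}|\hat\alpha-\hat\beta|^n$ equals $1$. An explicit one-parameter computation along the frustum family (linear $\psi$, parametrized by $p = a/b$) will check the inequality on the extremal boundary, showing the sup equals $1$ precisely at the endpoints $p = 0$, $p = 1$, $p = \infty$.

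For the equality statement, saturation forces equality in the Brunn--Minkowski chord bound, so $\psi = A^{1/(n-1)}$ is affine in $t$. The rigidity case of Brunn--Minkowski applied to consecutive cross-sections $K_t$ then forces them to be homothetic copies of a single $(n-1)$-dimensional convex body $P$, with linear scaling factor $r(t)$ and affine translation $c(t)$; such a $K$ is automatically convex. The additional requirement that the frustum parameter equal $0$, $1$, or $\infty$ translates into $r$ vanishing at one endpoint (giving $K = \mathrm{conv}\{P \cup \{p\}\}$, a cone) or $r$ constant (giving $K = \mathrm{conv}\{P \cup (P + \vec\ell)\}$, an oblique prism), matching the stated dichotomy.
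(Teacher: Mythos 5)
Your overall strategy---reduce to a one-dimensional problem about the concave profile $\psi = A^{1/(n-1)}$, then verify the sharp inequality along a one-parameter family of frustums---matches in spirit the paper's Schwarz-symmetrization-plus-truncated-cone route, and the Brunn--Minkowski observations you invoke (concavity of $A^{1/(n-1)}$ and of the cumulative volume $F^{1/n}$, the centroid identity $\int F\,\mathrm{d}t = h_+V$, the chord bound sharp at frustums, the tangent bound sharp at cones) are all correct. But the step you yourself flag as ``the main analytical obstacle'' is left as a plan rather than a proof, and that is precisely where the work lies.

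Two things are missing. First, you have not shown that the supremum of $\Psi(\hat\alpha,\hat\beta)$ over admissible concave profiles with prescribed centroid is attained on the frustum family. Combining the chord bound with the two tangent bounds is not obviously sharp: the chord bound controls $V$ given $(a,b,L)$, the tangent bounds control $h_\pm/L$, but each is tight on a different extremal body, and for a generic frustum the tangent bound is strictly loose---yet the theorem must hold with a limit of equality along the entire frustum family, not just at the three corner cases. The paper circumvents this by an explicit deformation (Step~3): starting from the symmetrized body $K'$, it builds a one-parameter family of truncated cones $K_t$ with $V(K_t)=V(K')$, shows $c(K_0)\cdot u\geq 0\geq c(K_1)\cdot u$ by a layer-cake comparison of cross-section areas, picks $K_{t'}$ with centroid at the origin by continuity, and notes that $K_{t'}$ has strictly larger top and bottom faces than $K'$; monotonicity of $\Psi$ then gives $\Psi(x',y')<\Psi(x_{t'},y_{t'})$. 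That monotonicity in turn requires the range bounds $x,y\leq \tfrac{1}{n+1}$ of Lemma~\ref{L1}, since $\partial\Psi/\partial y = n - n(n+1)^{n-1}(x-y)^{n-1}$ can be negative when $x-y$ is large; your proposal does not address this sign issue, so even with separate upper bounds on $\hat\alpha$ and $\hat\beta$ you could not conclude an upper bound on $\Psi$. Second, the ``explicit one-parameter computation along the frustum family'' that you defer is itself the bulk of the proof: after substituting the closed-form rational expressions for $x,y$ in the radius-ratio $t$, it reduces to the nontrivial polynomial inequality of Lemma~\ref{L2}, whose verification occupies the appendix with five rounds of differentiation and sign checks. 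As written, the proposal identifies the correct framework but leaves both genuinely hard steps---reduction to frustums and the frustum inequality---unproven.
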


\begin{figure}
    \includegraphics[width=0.3\linewidth]{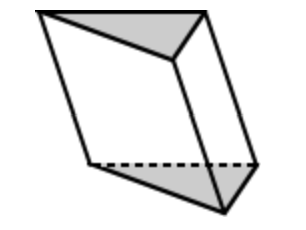} 
      \includegraphics[width=0.25\linewidth]{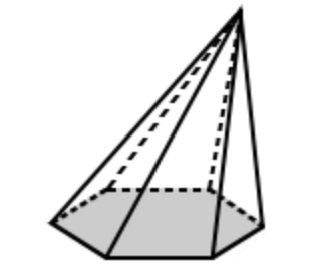}
    \caption{Left: oblique prism, Right: cone}
\end{figure}

We make some remarks on our condition. First, for 
$n=2$, condition \eqref{necessary} coincides with that of Liu–Lu–Sun–Xiong. Moreover, if the equality holds for some $u$, then $K$ is either a trapezoid (including a parallelogram or rectangle as a special case) with its two parallel sides perpendicular to $u$, or a triangle with one side perpendicular to $u$. This situation differs slightly from the equality case for $n\geq 3$. While the cone case is analogous to the triangle, the prism case is more strict than the trapezoid case.  An oblique prism has two congruent bases differing only by a translation. Its two dimension analogue is necessarily a parallelogram.  A trapezoid is the convex hull of its two parallel sides, which can have different length. 

Secondly, \eqref{necessary} actually refines the subspace concentration condition for one-dimensional subspaces, as we now explain.

A finite Borel measure $\mu$ on $S^{n-1}$ is said to satisfy the subspace concentration condition, if for every subspace $\xi$ of $\mathbb{R}^n$ with $0<\operatorname{dim} \xi<n$, 
$$
\mu\left(\xi \cap S^{n-1}\right) \leq \frac{\operatorname{dim} \xi}{n} \mu\left(S^{n-1}\right),
$$
and if the equality holds for some $\xi$, then there exists a subspace $\xi^{\prime}$, which is complementary to $\xi$ in $\mathbb{R}^n$, so that 
$$
\mu\left(\xi^{\prime} \cap S^{n-1}\right)=\frac{\operatorname{dim} \xi^{\prime}}{n} \mu\left(S^{n-1}\right) ,
$$
also holds. 

He–Leng–Li\cite{HLL} first proved origin-symmetric convex polytopes must satisfy the subspace concentration condition. An independent proof was later provided by Xiong\cite{Xi}. Subsequently, Böröczky–Lutwak–Yang–Zhang\cite{BLYZ} showed that the subspace concentration condition is necessary for arbitrary origin-symmetric convex bodies.

If we take $\xi$ to be the one-dimensional subspace spanned by $u$, the subspace concentration condition becomes 
\[
n \frac{\mu(u)+\mu(-u)}{V}\leq 1. 
\]
Since every origin-symmetric convex body has its centroid at the origin, thus \eqref{necessary} refines the above inequality with an extra term $(n+1)^{n-1} |\frac{\mu(u)-\mu(-u)}{V}|^n$. Based on this connection with the subspace concentration condition, it would be interesting to explore refinements for higher-dimensional subspaces. 

In Section 2, we present the proof of the main theorem. In the appendix, We add proofs of two lemmas based on simple but lengthy calculations. 

\section{Proof of Theorem \ref{main}}

In this section we present the proof of Theorem \ref{main}, which consists mainly three steps: the first step is to reduce  \eqref{necessary} to the Schwarz symmetrization of $K$ with respect to $u$. The second step is to  verify  \eqref{necessary} for truncated cones. The final step is to show one can replace the Schwarz symmetrization body of the first step by a truncated cone. 

\subsection{Step 1: Reduction to its Schwarz symmetrization}
\ 

We setup some basic notations first. Denote by $K$ an $n$-dimensional convex body,i.e., a compact convex set of $\mathbb{R}^n$ with nonempty interior.   
Its support function $h_K: S^{n-1}\to \mathbb{R}$ is defined as 
\[
h_K(u)=\sup_{x\in K} x\cdot u. 
\]
Set $F_K(u)=\{ y\in K\quad |\quad  y \cdot u=h_K(u)\}$. Fix a unit vector $u \in \mathbb{S}^{n-1}$. Let 
\[
A_K(t)=\mathcal{H}^{n-1}(K\cap \{x\cdot u=t\})
\] denote the area of a hyperplane section of $K$. 
Clearly, one can think $A_K(t)$ is defined on $t \in\left[-h_K(-u), h_K(u)\right]$. 

For a give convex body $K$, we form a new set $K'$ by replacing each $n-1$-dimensional slice $K\cap \{x\cdot u=t\}$ by an $(n-1)$-dimensional disks of same area centered along $u$-axis. 

More precisely, $\forall t \in\left[-h_K(-u), h_K(u)\right]$, 
\begin{align}
 K^{\prime} \cap\left\{ x \cdot u=t\right\} =\{x \in \mathbf{R}^n:\|x-t u\| \leq\left(\frac{A_K(t)}{\omega_{n-1}}\right)^{\frac{1}{n-1}},\quad  x \cdot u=t \}. 
\end{align}
Here $\omega_{n-1}$ is the volume of unit $(n-1)$-dimensional ball.

Such a procedure is called the \textbf{Schwarz symmetrization} of $K$ with respect to $u$. 
It is a well-known fact that such $K'$ obtained is also a convex body. 
Moreover since $A_{K}(t)=A_{K'}(t)$ by the construction, we have 
\[
V\left(K^{\prime}\right)=\int_{-h_K(-u)}^{h_K(u)} A_{K^{\prime}}(t) \mathrm{d} t=\int_{-h_K(-u)}^{h_K(u)} A_K(t) \mathrm{d} t=V(K).
\]

The reduction of $K$ to its Schwarz symmetrization is due to the following proposition. 

\begin{proposition} \label{P1}
    Let $K$ be a convex body with centroid at the origin, let $K^{\prime}$ be its Schwarz symmetrization with respect a unit vector $u\in\mathbb{S}^{n-1}$. Denote the corresponding cone volume measures by $\mu_K$ and $\mu_{K'}$ respectively, then 
$\mu_K( u)=\mu_{K'}( u)$, $\mu_K( -u)=\mu_{K'}( -u)$. 
\end{proposition}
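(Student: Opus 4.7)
The plan is direct and uses only the explicit formula for the cone-volume measure at a singleton together with two basic properties preserved by Schwarz symmetrization.

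First I would record the atomic formula for the cone-volume measure. Since $d\mu_K = \frac{1}{n}(x\cdot \nu_K(x))\, d\mathcal{H}^{n-1}$ pushed forward by the Gauss map, the mass of $\mu_K$ at the singleton $\{u\}$ is carried by the face $F_K(u) = \nu_K^{-1}(u)$, which lies in the hyperplane $\{x\cdot u = h_K(u)\}$. Hence
\begin{equation*}
\mu_K(\{u\}) \;=\; \frac{1}{n}\, h_K(u)\, \mathcal{H}^{n-1}(F_K(u)) \;=\; \frac{1}{n}\, h_K(u)\, A_K\!\left(h_K(u)\right),
\end{equation*}
and analogously $\mu_K(\{-u\}) = \frac{1}{n}\, h_K(-u)\, A_K(-h_K(-u))$. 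The identical formulas apply to $K'$.

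The main step is to verify that both $h_K(\pm u)$ and the extremal slice areas $A_K(\pm h_K(\pm u))$ are preserved under Schwarz symmetrization. Preservation of slice areas, $A_{K'}(t) = A_K(t)$ for all $t\in[-h_K(-u),h_K(u)]$, is built into the definition of $K'$. For the support function in direction $u$, the inclusion $K'\subset \{x:\, x\cdot u\le h_K(u)\}$ yields $h_{K'}(u)\le h_K(u)$; for the reverse inequality, as $t\uparrow h_K(u)$ the center $tu$ of the slice of $K'$ lies in $K'$, so by closedness $h_K(u)\, u \in K'$, forcing $h_{K'}(u)\ge h_K(u)$. The same reasoning handles $-u$. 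Substituting these equalities gives
\begin{equation*}
\mu_{K'}(\{u\}) \;=\; \frac{1}{n}\, h_{K'}(u)\, A_{K'}(h_{K'}(u)) \;=\; \frac{1}{n}\, h_K(u)\, A_K(h_K(u)) \;=\; \mu_K(\{u\}),
\end{equation*}
and the analogous identity for $\{-u\}$ completes the argument.

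I expect no real obstacle: the only nuance is the degenerate case $A_K(h_K(u))=0$, where both atoms vanish and the identity reduces to $0=0$ (while $h_{K'}(u)=h_K(u)$ still holds via the limiting-center argument). It is worth remarking that the centroid-at-origin hypothesis plays no role in \emph{this} proposition in isolation; rather, it is used downstream, because the Schwarz symmetrization of a body with centroid at the origin again has centroid at the origin (the $u$-component is unchanged since slice areas are preserved, and the components perpendicular to $u$ vanish by the axial symmetry of $K'$), thereby allowing the subsequent steps of the proof of Theorem~\ref{main} to be carried out on $K'$.
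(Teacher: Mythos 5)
Your proof is correct and follows essentially the same route as the paper's: both rest on the preservation of $h(\pm u)$ and of the terminal slice areas $A(\pm h(\pm u))$ under Schwarz symmetrization, together with the atomic formula $\mu(\{u\}) = \tfrac{1}{n}\,h(u)\,\mathcal{H}^{n-1}(F(u))$. The paper's proof foregrounds the computation showing $c(K')=0$, which you rightly observe is the ingredient needed downstream in Step~3 rather than for this proposition in isolation; that same computation also quietly guarantees the origin lies in the interior of $K'$, so that $\mu_{K'}$ is a genuine nonnegative cone-volume measure, a point your proof leaves implicit when you say the identical formulas apply to $K'$.
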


\begin{proof}
It follows from the definition of the Schwarz symmetrization that 
\[
h_K(\pm u)=h_{K'}(\pm u) \quad \text{and} \quad A_{K}( h_K(\pm u))=A_{K'} (h_{K'}(\pm u)).
\]
We shall show that the centroid of $K'$ is also at the origin, from which the conclusion follows. 

In the $u$ direction, we have 
\begin{align} \notag
c\left(K^{\prime}\right) \cdot u=&\frac{1}{V\left(K^{\prime}\right)} \int_{K^{\prime}} x \cdot u d x=\frac{\int_{-h_K(-u)}^{h_K(u)} t A_{K^{\prime}}(t) \mathrm{d} t}{\int_{-h_K(-u)}^{h_K(u)} A_{K^{\prime}}(t) \mathrm{d} t} \\ \notag
&=\frac{\int_{-h_K(-u)}^{h_K(u)} t A_K(t) \mathrm{d} t}{\int_{-h_K(-u)}^{h_K(u)} A_K(t) \mathrm{d} t}=c(K) \cdot u=0. 
\end{align}
The coordinate of the centroid $c(K')$ along other directions perpendicular to $u$ is clearly zero.
\end{proof}

\subsection{Step 2: Verification of \eqref{necessary} for truncated cones}
\ 

In this subsection, we verify \eqref{necessary} for truncated cones. 

By previous section, we may assume $K$ is Schwarz symmetrized with respect to $u$.
Now we further assume that $K=\operatorname{conv}\{F_{K}(u)\cup F_{k}(-u)\}$, i.e., when $K$ happens to be a truncated cone. 

\begin{proposition} \label{P2}
Let $u$ be any fixed unit vector, and assume $K$ is a truncated cone with center axis along the $u$ direction. Denote the cone volume measure of $K$ by $\mu$, set $x=\frac{\mu(u)}{V(K)}$ and $y=\frac{\mu (-u)}{V(K)}$, then 
    \begin{align} \label{algebraic}
    n(x+y)+(n+1)^{n-1}(x-y)^{n}\leq 1.
    \end{align}
    Moreover the equality holds if and only if either $(x,y)=(0, \frac{1}{n+1})$, $(x,y)=(\frac{1}{n+1}, 0)$ or $x=y=\frac{1}{2n}$. 
\end{proposition}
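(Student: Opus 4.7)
The plan is to exploit rotational symmetry of $K$ (from the Schwarz symmetrization) together with the centroid condition to reduce \eqref{algebraic} to a one-variable inequality in the ratio of the two radii, and then split it into an AM-GM piece and a more delicate polynomial piece.

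First I would set up coordinates: place the two parallel $(n-1)$-disc faces of $K$ at heights $a>0$ and $-b<0$, with radii $p$ and $q$ respectively, so that each slice at height $t\in[-b,a]$ is a disc of radius $r(t)$ linear in $t$. Since $x$ and $y$ are scale-invariant, I may take $p=1$ and $q=\alpha$, and by $u\mapsto -u$ if needed I may further assume $\alpha\in[0,1]$. The centroid condition $\int_{-b}^{a}t\,r(t)^{n-1}\,dt=0$ determines $b/(a+b)$ as an explicit rational function of $\alpha$, and combining this with $\mu(u)=\tfrac{a}{n}\omega_{n-1}p^{n-1}$, $\mu(-u)=\tfrac{b}{n}\omega_{n-1}q^{n-1}$, $V=\omega_{n-1}\int_{-b}^{a}r(t)^{n-1}\,dt$ expresses $x,y$ as explicit rational functions of $\alpha$. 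Setting $\sigma:=1+\alpha+\cdots+\alpha^{n-1}$ and using the identity $(1+\alpha^n)\sigma=\sum_{k=0}^{2n-1}\alpha^k$, direct simplification produces
\[
x+y=\frac{1}{n+1}\!\left[1+\frac{n\alpha^{n-1}}{\sigma^2}\right],\qquad x-y=\frac{(1+\alpha)\bigl[\,\sum_{k=0}^{n-1}\alpha^{2k}-n\alpha^{n-1}\,\bigr]}{(n+1)(1-\alpha)\sigma^2};
\]
in particular $x\geq y$ on $[0,1]$ by the sum-of-squares identity $\sum_{k=0}^{n-1}\alpha^{2k}-n\alpha^{n-1}=\sum_{k=0}^{\lfloor(n-1)/2\rfloor}(\alpha^k-\alpha^{n-1-k})^2$.

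Next I would split \eqref{algebraic} into the two sub-inequalities $n(x+y)\leq 1$ and $(n+1)^{n-1}(x-y)^n\leq 1-n(x+y)$. The first follows directly from AM-GM in the form $\sigma\geq n\alpha^{(n-1)/2}$, which gives $n\alpha^{n-1}/\sigma^2\leq 1/n$ and hence $n(x+y)\leq\frac{n}{n+1}(1+\tfrac{1}{n})=1$ with equality iff $\alpha=1$. Using the formula for $x+y$, the second sub-inequality rearranges to
\[
\bigl[(n+1)(x-y)\bigr]^n\,\sigma^2\leq \sigma^2-n^2\alpha^{n-1},
\]
and upon substituting the formula for $x-y$ and clearing denominators it becomes an explicit polynomial inequality in $\alpha$ on $[0,1]$, both sides vanishing at $\alpha=1$ and coinciding at $\alpha=0$.

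The main obstacle will be verifying this polynomial inequality. I plan to combine the sum-of-squares identity above with the factorization $\sigma^2-n^2\alpha^{n-1}=(\sigma-n\alpha^{(n-1)/2})(\sigma+n\alpha^{(n-1)/2})$ and direct single-variable analysis; note that the left-hand side vanishes to order $n$ at $\alpha=1$ while the right-hand side only to order $2$, which (since $n\geq 3$) makes the inequality strict near $\alpha=1$, leaving a comparison of polynomial signs on $[0,1]$ to close the argument. Following the authors' own convention of deferring ``simple but lengthy'' computations to the appendix, I would isolate this step as a technical lemma. For the equality analysis, saturating AM-GM forces $\alpha=1$ (the cylinder, $x=y=1/(2n)$); saturating the polynomial inequality while AM-GM remains strict forces $\alpha=0$ (the cone, $(x,y)=(1/(n+1),0)$); the third case $(x,y)=(0,1/(n+1))$ comes from the $u\mapsto -u$ swap in the WLOG reduction.
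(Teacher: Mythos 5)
Your reduction is correct and lands on exactly the same target as the paper's Lemma 2.2. Your $\alpha\in[0,1]$ is the reciprocal of the paper's $t\geq 1$ (up to the $u\mapsto-u$ swap), and after clearing denominators the sub-inequality $\bigl[(n+1)(x-y)\bigr]^n\sigma^2\leq\sigma^2-n^2\alpha^{n-1}$ becomes, under $\alpha=1/t$, precisely the paper's key inequality
\[
\left(t^n-1\right)^{2 n}-n^2 t^{n-1}(t-1)^2\left(t^n-1\right)^{2 n-2}\geq \left(t^{2 n}-n t^{n+1}+n t^{n-1}-1\right)^n,
\]
which is what the authors isolate as Lemma \ref{L2}. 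Your treatment of the peripheral parts is in fact cleaner than the paper's: the AM-GM bound $\sigma\geq n\alpha^{(n-1)/2}$ gives $n(x+y)\leq 1$ in one line (the paper proves this via two rounds of differentiation in Lemma \ref{L1}, though they also note it is a special case of the subspace concentration condition), and the sum-of-squares identity for $x-y$ is a nice touch.

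The genuine gap is that the hard polynomial inequality is not proved. Your sketch — vanishing order $n$ on the left versus order $2$ on the right at $\alpha=1$, plus the factorization $\sigma^2-n^2\alpha^{n-1}=(\sigma-n\alpha^{(n-1)/2})(\sigma+n\alpha^{(n-1)/2})$ and the SOS decomposition — only controls a neighborhood of $\alpha=1$ and does not give a sign comparison over all of $[0,1]$. Indeed both sides equal $1$ at $\alpha=0$, so there is no slack at either endpoint; and the left side, being an $n$-th power of a rational function, is not a polynomial until you multiply through by $(1-\alpha^n)^{2n-2}$, after which the factorization and SOS structure you propose to lean on are no longer visible. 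The paper's actual proof of Lemma \ref{L2} is quite different in mechanism: it shows the ratio $F/G$ of the two sides is non-increasing on $(1,\infty)$ with limit $1$ at $t=\infty$, which reduces (via comparing logarithmic derivatives $F'/F\leq G'/G$) to a single explicit polynomial inequality $p_1(t)\geq 0$, verified by a five-stage scheme of differentiating, dividing out powers of $t$, and checking signs of Taylor coefficients at $t=1$. That computation is the genuine content of the proposition, and your write-up does not supply it nor a workable substitute.
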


\begin{proof}
First of all, if $K$ is a right cylinder with axis along the $u$ direction, it is easy to find $x=y=\frac{1}{2n}$ and \eqref{algebraic} trivially holds.  

Secondly, a simple computation shows that $x,y$ only depend on the ratio of radii of $F_{K}(u)$ and $F_{K}(-u)$ and does not depend on the height. By reversing the direction of $u$, we may assume that the top base of $K$ is larger than the bottom base of $K$. So we may assume that $K$ lies between $x\cdot u=t, (t>1)$ and $x\cdot u=1$ with lower radius being $1$ and upper radius being $t$. Hence $t$ can be regarded as the ratio of radii of bases of $K$. $t=1$ corresponds to a right cylinder and $t=\infty$ corresponds to a right circular cone. 

Then
\[
V(K)  =\int_1^t \omega_{n-1} h^{n-1} \mathrm{d} h=\frac{\omega_{n-1}}{n}\left(t^n-1\right).
\]

\[
c(K) \cdot u  =\frac{1}{V(K)} \int_K x \cdot u \mathrm{~d} x=\frac{\omega_{n-1}\int_1^t  h^{n} \mathrm{d} h}{\omega_{n-1}\int_1^t h^{n-1} \mathrm{d} h} =\frac{\frac{1}{n+1}\left(t^{n+1}-1\right)}{\frac{1}{n}\left(t^n-1\right)}.
\]

Consequently, 
\begin{align} \label{xy}
x=\frac{t^{2 n}-(n+1) t^n+n t^{n-1}}{(n+1)\left(t^n-1\right)^2} \quad \text{and} \quad  \quad y=\frac{n t^{n+1}-(n+1) t^n+1}{(n+1)\left(t^n-1\right)^2}.
\end{align}

We present two lemmas whose proofs will be postponed in the appendix. 

\begin{lemma} \label{L1}
Let $x,y$ be as in \eqref{xy}, then
    \begin{align} \notag
    x\leq \frac{1}{n+1}, \quad y\leq \frac{1}{n+1}, \quad x+y\leq \frac{1}{n}.
\end{align}
\end{lemma}

\begin{lemma} \label{L2}
        \begin{align} \label{keyinequality}
\frac{\left(t^n-1\right)^{2 n}-n^2 t^{n-1}(t-1)^2\left(t^n-1\right)^{2 n-2}}{\left(t^{2 n}-n t^{n+1}+n t^{n-1}-1\right)^n} \geq 1, \quad \forall t\geq 1.
\end{align}
\end{lemma}

Now let us go back to the inequality \eqref{algebraic}. Using \eqref{xy}, it amounts to prove  
\begin{align} \label{xy'}
    n A+(1-B)^n \leq 1,
\end{align}
where
\[
 A=\frac{n t^{n-1}(t-1)^2}{\left(t^n-1\right)^2}  \quad 
B=\frac{n t^{n+1}-2 t^n-n t^{n-1}+2}{\left(t^n-1\right)^2}.
\]
A simplification implies that \eqref{xy'} is equivalent to \eqref{keyinequality}. This finishes the proof. Based on the proof of Lemma \ref{L2}, it is easy to see that inequality in \eqref{xy'} is strict and it holds only if $t=\infty$ which corresponds to one of $x, y$ vanishes. 

\end{proof}

\subsection{Step 3: Reduction to a truncated cone}
\ 

After the first step, we get a Schwarz symmetrization $K'$. Suppose $K'\neq \operatorname{conv}\{F_{K'}(u)\cup F_{K'}(-u)\}$. Since $K'$ is a convex set, we must have $K'\supset \operatorname{conv}\{F_{K'}(u)\cup F_{K'}(-u)\}$ and
thus $V(K')>V(\operatorname{conv}\{F_{K'}(u)\cup F_{K'}(-u)\})$. 

Note $F_{K'}(u)$ and $F_{K'}(-u)$ are both $(n-1)$-disks (possibly degenerating to a point), which shall be referred as the top and the bottom of $K'$ respectively. There exists a unique $(n-1)$-dimensional disk $D_0$ in the hyperplane $u \cdot x=h_{K^{\prime}}(u)$ that has the same center as $F_{K^{\prime}}(u)$ but a larger radius such that
$$
V\left(\operatorname{conv}\left\{D_0 \cup F_{K^{\prime}}(-u)\right\}\right)=V\left(K^{\prime}\right) .
$$
Similarly, there exists a unique $(n-1)$-dimensional disk $D_1$ in the hyperplane $u \cdot x=-h_{K^{\prime}}(-u)$ that has the same center as $F_{K^{\prime}}(-u)$ but a larger radius such that
$$
V\left(\operatorname{conv}\left\{F_{K^{\prime}}(u)\cup D_1\right\}\right)=V\left(K^{\prime}\right) .
$$
Set $K_0=\operatorname{conv}\{ D_0\cup F_{K'}(-u)\}$ and $K_1=\operatorname{conv}\{ F_{k'}(u)\cup D_1\}$. $K_0$ is a truncated cone with top $D_0$ and bottom $F_{K'}(-u)$ and $K_1$ is a truncated cone with top $F_{K'}(u)$ and bottom $D_1$. Let $K_t, t\in[0,1]$ be a continuous family of truncated cones trapped in between $x\cdot u= h_{K'}( u)$ and $x\cdot u=-h_{K'}(-u)$ with the top being $(1-t)D_0+tF_{K'}(u)$ and $V(K_t)=V(K')$. With this volume constraint, the bottom of this family of truncated cones forms also a continuous deformation of from $D_1$ to $F_{K'}(-u)$. 

\begin{figure}[htbp]
    \centering
    \includegraphics[width=0.75\linewidth]{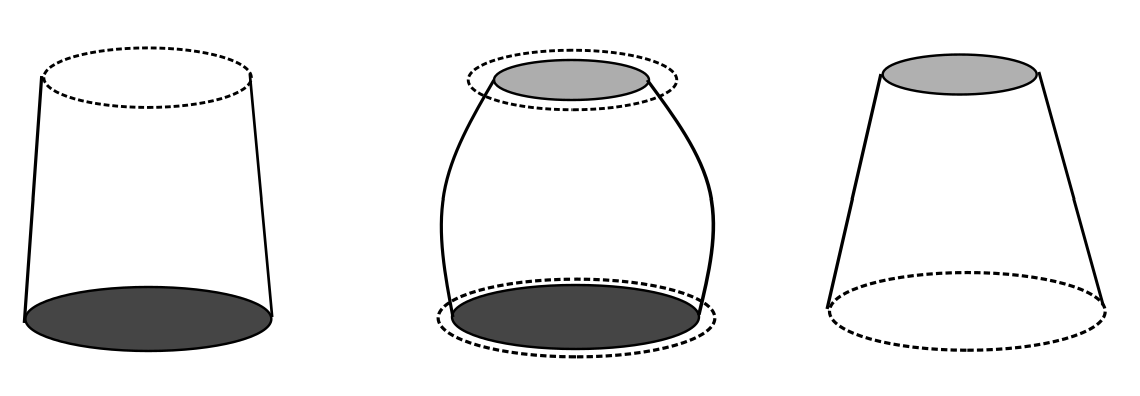}
    \caption{Left: $K_0$, Middle: $K'$, Right: $K_1$}
\end{figure}

We \textbf{claim} that $c(K_0)\cdot u\geq 0\geq c(K_1)\cdot u.$
In fact, there exists a unique $h_0\in [-h_{K'}(-u), h_{K'}(u)]$ such that 
\[
A_{K'}(h)\geq A_0(h), \quad \forall h\leq h_0, 
\]
and 
\[
A_{K'}(h)\leq A_0(h), \quad \forall h\geq h_0.
\]

Thus 
\[
\int_{-h_{K'}(-u)}^{h_0} h (A_{K'}(h)-A_0(h)) dh \leq h_0 \int_{-h_{K'}(-u)}^{h_0} (A_{K'}(h)-A_0(h)) dh,
\]
and 
\[
\int_{h_0}^{h_{K'}(u)} h (A_{K'}(h)-A_0(h)) dh \leq h_0  \int_{h_0}^{h_{K'}(u)}  (A_{K'}(h)-A_0(h)) dh.
\]

Adding both sides of above two inequalities and using 
\[
V(K_0)=\int_{-h_{K'}(-u)}^{h_{K'}(u)} A_0(h) dh =V(K')=\int_{-h_{K'}(-u)}^{h_{K'}(u)} A_{K'}(h) dh,
\]
we find that 
\[
c(K_0)\cdot u=\int_{-h_{K'}(-u)}^{h_{K'}(u)} h A_0(h) dh \geq \int_{-h_{K'}(-u)}^{h_{K'}(u)} h A_{K'}(h) dh=c(K')\cdot u=0.
\]
The second inequality follows verbatim. 

Therefore by continuity, there exists $t'\in(0,1)$ such that $c(K_{t'})\cdot u=0$. 
Comparing $K'$ with $K_{t'}$, we find both the top and the bottom of $K'$ are strictly smaller than the top and the bottom of $K_{t'}$, yet $V(K')=V(K_{t'})$. 
Therefore 
\[
x'=\frac{\mu_{K'}(u)}{V(K')}< \frac{\mu_{t'}(u)}{V(K_{t'})}=x_{t'},
\]
and 
\[
y'=\frac{\mu_{K'}(-u)}{V(K')}<\frac{\mu_{t'}(-u)}{V(K_{t'})}=y_{t'}.
\]

Set $\Psi(x,y)= n(x+y)+(n+1)^{n-1}(x-y)^{n}$, we have
\[
\frac{\partial \Psi}{\partial x}(x,y)=n+n(n+1)^{n-1}(x-y)^{n-1},
\]
and 
\[
\frac{\partial \Psi}{\partial y}(x,y)=n-n(n+1) ^{n-1}(x-y)^{n-1}. 
\]
In view of the range \eqref{range} for $x_{t'}$ and $y_{t'}$, we have $\frac{\partial \Psi}{\partial x}\geq 0$ and $\frac{\partial \Psi}{\partial y}\geq 0$, then
\[
\Psi(x', y')< \Psi(x_{t'}, y_{t'}). 
\]
This implies one can reduce the verification of \eqref{necessary} for $K'$ to the verification of \eqref{necessary} for the truncated cone $K_{t'}$, which is done in Proposition \ref{P2}.

\subsection{Completion of the proof}
\ 

Now we are in a position to combine the results achieved so far. Given an arbitrary convex body $K$ with centroid at the origin, Step 1 shows that replacing $K$ by its Schwarz symmetrization $K^{\prime}$ with respect to the direction $u$ does not change the values of $x=\frac{\mu(u)}{V}$ and $y=\frac{\mu(-u)}{V}$. Then, by Step 3, we may further replace $K^{\prime}$ by a truncated cone of the same volume, which yields a larger value of
$$
\Psi(x, y)=n(x+y)+(n+1)^{n-1}(x-y)^n.
$$
However, $\Psi(x,y)\leq 1$ by Step 2. By considering a pair of opposite directions, we get the desired \eqref{necessary}.

The equality case occurs if and only if $K'$ is a right cylinder or a right circular cone. So $A_{K}^{\frac{1}{n-1}}(t)$ is a linear function, by the virtue of the Brunn-Minkowski inequality, the original $K$ must be either an oblique prism or a cone.

\section{Appendix}
In this appendix, we provide proofs of Lemma \ref{L1} and Lemma \ref{L2}.
Both proofs follow a similar approach: we successively take derivatives and factor out common powers of t to reduce to a polynomial, which is simple enough to verify that it is nonnegative. For reader's convenience, we rewrite two lemmas. 

\begin{lemma} 
Let $x,y$ be given as 
\begin{align}\notag 
x=\frac{t^{2 n}-(n+1) t^n+n t^{n-1}}{(n+1)\left(t^n-1\right)^2} \quad \text{and} \quad  \quad y=\frac{n t^{n+1}-(n+1) t^n+1}{(n+1)\left(t^n-1\right)^2},
\end{align}
then $\forall t\geq 1$, we have 
\begin{align} \label{range}
    x\leq \frac{1}{n+1}, \quad y\leq \frac{1}{n+1}, \quad x+y\leq \frac{1}{n}.
\end{align}
\end{lemma}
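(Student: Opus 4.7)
The plan is to clear denominators in each of the three inequalities of \eqref{range} and reduce them to polynomial inequalities $P(t) \geq 0$ for $t \geq 1$. Each resulting polynomial will satisfy $P(1) = 0$, so I verify the inequality by differentiating once or twice and showing the derivative is nonnegative on $[1, \infty)$, iterating the elementary monotonicity principle: if $P(1) = 0$ and $P'(t) \geq 0$ on $[1,\infty)$, then $P(t) \geq 0$ there.

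For the first inequality $x \leq \frac{1}{n+1}$, multiplying through by $(n+1)(t^n - 1)^2 > 0$ reduces matters to
\[
(n-1) t^n - n t^{n-1} + 1 \geq 0.
\]
Calling this $P_1(t)$, one has $P_1(1) = 0$ and $P_1'(t) = n(n-1) t^{n-2}(t - 1) \geq 0$ for $t \geq 1$. For the second inequality $y \leq \frac{1}{n+1}$, the same procedure reduces matters to
\[
g(t) := t^n - n t + (n-1) \geq 0,
\]
with $g(1) = 0$ and $g'(t) = n(t^{n-1} - 1) \geq 0$ on $[1, \infty)$.

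The third inequality $x + y \leq \frac{1}{n}$ will be the longest. After clearing denominators I expect to obtain
\[
h(t) := t^{2n} - n^2 t^{n+1} + 2(n^2-1) t^n - n^2 t^{n-1} + 1 \geq 0,
\]
with $h(1) = 0$. Differentiating and pulling out $n t^{n-2}$ yields $h'(t) = n t^{n-2} p(t)$, where
\[
p(t) = 2 t^{n+1} - n(n+1) t^2 + 2(n^2-1) t - n(n-1)
\]
satisfies $p(1) = 0$. Differentiating once more, one finds $p'(t) = 2(n+1)\, g(t)$, which is nonnegative by the second step. Two successive applications of the monotonicity principle, first to $p$ and then to $h$, complete the argument.

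The main obstacle is the third step: the polynomial $h$ has degree $2n$ with mixed-sign coefficients, and no clean factorization presents itself. The crucial observation that unlocks the proof is that after two differentiations and extracting $t^{n-2}$, the expression collapses to a constant multiple of the polynomial $g(t)$ already handled in the second step, so the high-degree difficulty dissolves into the preceding case.
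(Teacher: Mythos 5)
Your proposal is correct and follows essentially the same route as the paper: clear denominators, observe that the resulting polynomial vanishes at $t=1$, and differentiate one or more times to reduce to an elementary monotonicity check. Two minor points: for the third inequality you observe $p'(t)=2(n+1)g(t)$ and invoke the already-established $g\geq 0$, where the paper takes one further derivative of $p'$ instead; and your pairing of the reduced polynomials $(n-1)t^n-nt^{n-1}+1$ with $x\leq\tfrac{1}{n+1}$ and $t^n-nt+(n-1)$ with $y\leq\tfrac{1}{n+1}$ is the correct one (the paper's write-up appears to swap these labels, though both polynomials are handled, so the lemma is still proved).
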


\begin{proof}[Proof of Lemma \ref{L1}]
\ 

Note $x \leq \frac{1}{n+1} \Leftrightarrow 1+n t^{n+1}-(n+1) t^n \leq\left(t^n-1\right)^2$. The latter is equivalent to 
$$
f_1(t):=t^n-n t+(n-1) \geq 0, \forall t \geq 1. 
$$
This follows from the fact that $f_1^{\prime}(t)=n t^{n-1}-n \geq 0, \forall t \geq 1$ and $f_1(1)=0$.

Similarly,
$y \leq  \frac{1}{n+1} \Leftrightarrow t^{2 n}+n t^{n-1}-(n+1) t^n \leq\left(t^n-1\right)^2 $ and the latter is equivalent to 
$$
f_2(t)=(n-1) t^n-n t^{n-1}+1 \geq 0, \forall t \geq 1.
$$
This follows from the fact that $f_2^{\prime}(t)=n(n-1) t^{n-1}-n(n-1) t^{n-2} \geq 0, \forall t \geq 1$ and  $f_2(1)=0$.

For $ (x+y) \leq \frac{1}{n}$, as noted in the introduction, this follows from the subspace concentration condition for the one-dimensional subspace spanned by $u$. Here we also offer a direct proof. It is equivalent to show 
\[
 \frac{n}{n+1} \cdot \frac{\left(t^n-1\right)^2+n t^{n+1}-2 n t^n+n t^{n-1}}{\left(t^n-1\right)^2} \leq 1.
 \]
After simplification, it is left to show
\[
g(t)=  t^{2 n}-n^2 t^{n+1}+\left(2 n^2-2\right) t^n-n^2 t^{n-1}+1 \geq 0, \quad \forall t \geq 1.
\]
Note $g(1)=0, g^{\prime}(1)=0$, we shall show that 
\[
g^{\prime}(t)=2 n t^{2 n-1}-n^2(n+1) t^n+n\left(2 n^2-2\right) t^{n-1}
-n^2(n-1) t^{n-2} \geq 0, \forall t \geq 1.
\]
Dividing by $nt^{n-2}$, one just needs to show 
\[
h(t)=2t^{n+1}-n(n+1) t^2+(2n^2-2) t-n(n-1)\geq0 \quad \forall t\geq 1. 
\]
Note again $h(1)=0$ and $h'(t)=2(n+1)[t^n-nt+n-1]$. So $h'(1)=0$.  Take derivative again, we find that $h''(t)=2(n+1)n (t^{n-1}-1)\geq 0$, $\forall t\geq 1$. The desired inequality follows. 
\end{proof}

\begin{lemma} 
Assume $n\geq 3$, then for $t>1$, we have 
\begin{align} \notag 
\frac{\left(t^n-1\right)^{2 n}-n^2 t^{n-1}(t-1)^2\left(t^n-1\right)^{2 n-2}}{\left(t^{2 n}-n t^{n+1}+n t^{n-1}-1\right)^n} \geq 1.
\end{align}
\end{lemma}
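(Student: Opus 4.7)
The plan is to recast the inequality as a polynomial inequality $F(t) \ge 0$ on $[1, \infty)$, to extract the highest possible power of $(t-1)$ from $F$, and then to verify nonnegativity of the resulting quotient by the expand-and-factor scheme used in Lemma \ref{L1}.

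Clearing the denominator, the lemma is equivalent to
\[ F(t) := a(t)\,(t^n-1)^{2n-2} - b(t)^n \ge 0, \qquad t \ge 1, \]
where $a(t) = t^{2n} - n^2 t^{n+1} + (2n^2 - 2) t^n - n^2 t^{n-1} + 1$ is the polynomial $g(t)$ introduced in the proof of Lemma \ref{L1}, and $b(t) = t^{2n} - n t^{n+1} + n t^{n-1} - 1$. The same derivative bookkeeping used in L1 yields $a^{(k)}(1) = 0$ for $0 \le k \le 3$ and $a^{(4)}(1) = 2n^2(n^2-1) > 0$; likewise $b^{(k)}(1) = 0$ for $0 \le k \le 2$ and $b'''(1) = 2n(n^2-1) > 0$. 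Hence $a(t) = (t-1)^4 \tilde a(t)$ and $b(t) = (t-1)^3 \tilde b(t)$ with $\tilde a(1) = n^2(n^2-1)/12 > 0$ and $\tilde b(1) = n(n^2-1)/3 > 0$. Writing $t^n - 1 = (t-1) S(t)$ with $S(t) := 1 + t + \cdots + t^{n-1}$ and using the hypothesis $n \ge 3$ (so that $3n \ge 2n + 2$), we get
\[ F(t) = (t-1)^{2n+2}\, G(t), \qquad G(t) := \tilde a(t)\, S(t)^{2n-2} - (t-1)^{n-2}\, \tilde b(t)^n, \]
and $G(1) = \tilde a(1)\, n^{2n-2} > 0$ delivers the desired inequality in a neighborhood of $t = 1$.

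It remains to show $G(t) \ge 0$ for all $t \ge 1$, which is the crux of the proof and the main obstacle. My approach is to expand $G(t)$ in powers of $t$ and to verify that every coefficient is nonnegative, which would yield $G(t) \ge 0$ for all $t \ge 0$. As a consistency check, in the smallest case $n = 3$ one computes
\[ G(t) = 12\, t^7 + 51\, t^6 + 108\, t^5 + 135\, t^4 + 108\, t^3 + 54\, t^2 + 16\, t + 2, \]
manifestly positive. For general $n \ge 3$, one would combine the explicit formulas for $\tilde a$, $\tilde b$, and $S$ (extracted by iteratively factoring out $(t-1)$) and expand the difference $\tilde a\, S^{2n-2} - (t-1)^{n-2}\, \tilde b^n$, a polynomial of degree $2n^2 - 2n - 2$, checking coefficient positivity combinatorially. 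If a head-on expansion proves too heavy, a fallback is to iterate the differentiate-and-factor step on $G$ itself: whenever $G^{(k)}(1) = 0$ one extracts another factor of $(t-1)$, and since $G$ has bounded degree the procedure terminates in a polynomial whose nonnegativity traces back to the AM-GM bound $S(t)^2 \ge n^2 t^{n-1}$, which is exactly the source of $a(t) \ge 0$ used in Lemma \ref{L1}.
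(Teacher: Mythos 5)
Your reformulation is correct as far as it goes and is a genuinely different route from the paper. You clear the denominator, write the claim as $F(t):=a(t)(t^n-1)^{2n-2}-b(t)^n\geq 0$, correctly identify the orders of vanishing of $a$ and $b$ at $t=1$, and factor out $(t-1)^{2n+2}$ to reduce to $G(t):=\tilde a(t)S(t)^{2n-2}-(t-1)^{n-2}\tilde b(t)^n\geq 0$ with $G(1)=\tilde a(1)n^{2n-2}>0$. This is a clean normalization. The paper, by contrast, keeps the original ratio intact, shows it is non-increasing on $[1,\infty)$ by comparing logarithmic derivatives of numerator and denominator, and uses $\lim_{t\to\infty}F/G=1$; the log-derivative comparison collapses to a single explicit polynomial inequality $p_1(t)\geq 0$ which is then verified by the iterated differentiate-divide-by-a-power-of-$t$-and-evaluate-at-$1$ scheme, carried through to completion.

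The gap is that you have not actually established the crux, $G(t)\geq 0$ for all $t\geq 1$ and all $n\geq 3$. You verify it only for $n=3$ and then state a \emph{plan} (``check coefficient positivity combinatorially'') without executing it; the claim that every coefficient of $G$ is nonnegative for general $n$ is plausible but unproven, and it is not obvious, since the leading coefficients of the two competing terms in $G$ cancel and in your own $n=3$ computation the degree drops from the nominal $2n^2-2n-2=10$ down to $7$ --- so delicate cancellations are in play. Your fallback is also incoherent as stated: you propose to ``iterate the differentiate-and-factor step on $G$ itself: whenever $G^{(k)}(1)=0$ one extracts another factor of $(t-1)$,'' but you just computed $G(1)>0$, so there is no factor of $(t-1)$ to extract; and the closing appeal to the AM--GM bound $S(t)^2\geq n^2 t^{n-1}$ is the source of $\tilde a\geq 0$, not of $G\geq 0$, which also has to beat the subtracted term $(t-1)^{n-2}\tilde b(t)^n$. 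As written, the proposal is a correct reduction followed by an unproved core inequality; it would need the coefficient-positivity claim to be demonstrated (or a genuine monotonicity/derivative argument for $G$ at $t=1$ carried out, as the paper does for its own $p_1$) to count as a proof.
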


\begin{proof}[Proof of Lemma \ref{L2}]
\ 

Set $F=\left(t^n-1\right)^{2 n}-n^2 t^{n-1}(t-1)^2\left(t^n-1\right)^{2 n-2}$ and $
G=\left(t^{2 n}-n t^{n+1}+n t^{n-1}-1\right)^n$.

We \textbf{claim} that $\frac{F}{G}$ is non increasing. Given that, notice that 
\[
\lim_{t\to \infty} \frac{F(t)}{G(t)}=1,
\]
and thus the inequality follows. 

To show that $\frac{F}{G}$ is non increasing, it suffices to show 
\[
\frac{F'}{F}\leq \frac{G'}{G}.
\]
We have
\[
\frac{F^{\prime}}{F}=n^2 \frac{t^{n-2}\left[2 t\left(t^n-1\right)^2-2 n(n-1) t^n(t-1)^2-(t-1)\left(t^n-1\right)((n+1) t-(n-1))\right]}{\left(t^n-1\right)\left[\left(t^n-1\right)^2-n^2 t^{n-1}(t-1)^2\right]},
\]
and
\[
\frac{G^{\prime}}{G}=n^2 \frac{t^{n-2}\left(2 t^{ n+1}-(n+1) t^2+(n-1) \right)}{t^{2 n}-n t^{n+1}+n t^{n-1}-1}.
\]

After simplification, it amounts to show $\forall t \geq 1$, we have 
\begin{align} \notag
(2 n-2)  &t^{3 n}+\left(2 n^3-2 n^2-2 n+2\right) t^{2 n+1}+\left(n^3-n\right) t^{2 n-2}+\left(2 n^3-2 n^2-4\right) t^{n+1} \\ \notag
& +\left(2 n^2-4 n-6\right) t^n+\left(n^3-2 n^2+n\right) t^{n-2}+(2 n+2) t \\\notag
& -\{ (2 n-2)+\left(2 n^3-2 n^2-2 n+2\right) t^{n-1}+\left(n^3-n\right) t^{n+2} \\\notag
& +\left(2 n^3-2 n^2-4\right) t^{2 n-1}+\left(2 n^2-4 n-6\right) t^{2 n}+\left(n^3-2 n^2+n\right) t^{2 n+2} \\ \notag
& +(2 n+2) t^{3 n-1} \}\geq 0.
\end{align}

Denote the polynomial on the left hand side by $p_1(t)$.  By direction computation, we find 
\[
p_1^{(m)}(1)=0, m=0,1,2.
\]
Let $p_2(t)=\frac{p_1^{(2)}(t)}{t^{n-4}}$, we again find that 
\[
p_2^{(m)}(1)=0, \quad m=0, 1, 2, 3, 4. 
\]
Let $p_3(t)=\frac{p_2^{(5)}(t)}{t^{n-5}}$, then 
\begin{align} \notag 
p_3(1)&=140n^2 (n-1)^2 (n+1)^2 (n-2)>0, \\ \notag
p_3'(1)&=140n^2 (n-1)^2 (n+1)^2 (n-2)(5n+1)>0, \\ \notag
p_3''(1)&=n^2 (n-1)^2 (n+1)^2 (n-2)(1568n^2+1344n+336)>0, \\ \notag
p_3^{(3)}(1)&=n^2 (n-1)^2 (n+1)^2 (n-2)(n+1/2)(2352n^2+1456n+1344)>0, \\ \notag
p_3^{(4)}(1)&=n^2 (n-1)^2 (n+1)^2 (n-2)(n+1/2)(2976n^3-848n^2+2608n+1056)>0, \\ \notag
p_3^{(5)}(1)&=n^2 (n-1)^2 (n+1)^2 (n+1/2)(n-1/2)(n-1/3) \\ \notag
& \cdot (n-2)(3552n^2-5742n-1728)>0.
\end{align}
Set $p_4(t)=\frac{p_3^{(5)}(t)}{t^{n-4}}$, then 
\begin{align} \notag 
p_4'(t)&\equiv (2n-2)3n(3n-1)(2n+2)(2n+1)2n\\ \notag
 &\cdot (2n-1)(2n-2)(n+2)(n+1)n(n-1)(n-2)>0.
\end{align}
Based on these computations, $p_1(t)\geq 0$, $\forall t\geq 1$. In fact, it is easy to see the inequality is strict. 

Note if $n=2$, we have the identity
\[
\frac{\left(t^n-1\right)^{2 n}-n^2 t^{n-1}(t-1)^2\left(t^n-1\right)^{2 n-2}}{\left(t^{2 n}-n t^{n+1}+n t^{n-1}-1\right)^n} \equiv  1,
\]
which explains why, in dimension two, if equality holds the convex body $K$ can generally be a trapezoid.
\end{proof}

\bibliographystyle{amsplain}
\bibliography{references}
\end{document}